\numberwithin{equation}{section}
\newtheorem{theorem}{Theorem}[subsection]
\newtheorem{lemma}[theorem]{Lemma}
\newtheorem{proposition}[theorem]{Proposition}
\newtheorem{corollary}[theorem]{Corollary}
\theoremstyle{definition}
\newtheorem{example}[theorem]{Example}
\newtheorem{question}[theorem]{Question}
\newtheorem{remark}[theorem]{Remark}
\DeclareMathOperator{\Tr}{Tr}
\DeclareMathOperator{\id}{id}
\DeclareMathOperator{\FPdim}{FPdim}
\DeclareMathOperator{\Hom}{Hom}
\DeclareMathOperator{\Ve}{Vec}
\DeclareMathOperator{\Rep}{Rep}
\DeclareMathOperator{\Irr}{Irr}
\newcommand{\B}{\mathcal{B}}
\newcommand{\C}{\mathcal{C}}
\newcommand{\Z}{\mathcal{Z}}
\newcommand{\A}{\mathcal{A}}
\renewcommand{\O}{\mathcal{O}}
\newcommand{\be}{\mathbf{1}}
\renewcommand{\be}{\mathbf{1}}
\newcommand{\BZ}{{\mathbb Z}}
\newcommand{\BR}{{\mathbb R}}
\newcommand{\BQ}{{\mathbb Q}}
\newcommand{\BC}{{\mathbb{C}}}
\newcommand{\bt}{\boxtimes}
\newcommand{\ot}{\otimes}
\begin{document}
\title{Remarks on global dimensions of fusion categories}

\author{Victor Ostrik}
\address{Department of Mathematics,
University of Oregon, Eugene, OR 97403, USA}
\address{Laboratory of Algebraic Geometry,
National Research University Higher School of Economics, Moscow, Russia}
\email{vostrik@uoregon.edu}

\begin{abstract} 
We give a nontrivial lower bound for global dimension of a spherical fusion category.
\end{abstract}

\date{\today} 
\maketitle  

\section{Introduction}

\subsection{}
This paper is a contribution to the theory of fusion categories over the field of complex numbers. 
Recall (see \cite{ENO, EGNO}) that a fusion category is a semisimple rigid tensor category with
finitely many simple objects, finite dimensional $\Hom-$spaces, and with simple unit object.
The simplest example is the category of finite dimensional vector spaces $\Ve$. A basic invariant
of a fusion category $\C$ is its {\em global dimension} $\dim(\C)\in \BR$ introduced by M.~M\"uger
in \cite{Mu}, see Section \ref{gldim} below. We consider the global dimension as a real valued
function on the set of equivalence classes of fusion categories. Here is our first result.

\begin{theorem}[see Theorem \ref{ffib}] The fibers of the map $\C \mapsto \dim(\C)$ are finite. 
In other words there is at most finitely many fusion categories (up to a tensor equivalence) of a fixed global dimension.
\end{theorem}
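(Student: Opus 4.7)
The plan is to reduce the statement to the known finiteness of fusion categories of bounded Frobenius--Perron dimension, which is a standard consequence of Ocneanu rigidity together with the bound $\FPdim(X)\ge 1$ for every simple object. What is needed is an upper bound on $\FPdim(\C)$ in terms of $\dim(\C)$ alone.

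The key input is that $\FPdim(\C)$ is a Galois conjugate of $\dim(\C)$ inside $\overline{\BQ}$. Both the categorical dimension $X_i\mapsto |X_i|$ and the Frobenius--Perron dimension $X_i\mapsto \FPdim(X_i)$ are ring characters of the Grothendieck ring $\Gr(\C)$, valued in a common cyclotomic number field, and the absolute Galois group $\mathrm{Gal}(\overline{\BQ}/\BQ)$ permutes the set of such characters. I would exhibit a specific $\sigma$ carrying the categorical-dimension character to the Perron--Frobenius character by means of a \emph{pseudo-unitary Galois twist} \`a la Etingof--Nikshych--Ostrik: replacing the spherical structure of $\C$ by its $\sigma$-conjugate should produce a pseudo-unitary spherical fusion category with the same fusion ring, whose global dimension is then both $\sigma(\dim(\C))$ and $\FPdim(\C)$. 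Applying $\sigma$ to $\dim(\C)=\sum_i |X_i|^2$ then yields $\sigma(\dim(\C))=\sum_i\FPdim(X_i)^2=\FPdim(\C)$.

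Given this, once $\dim(\C)=d$ is fixed, $\FPdim(\C)$ must lie in the finite set of Galois conjugates of $d$, and in particular $\FPdim(\C)\le\max_{\sigma}|\sigma(d)|$, a quantity depending only on $d$. The remaining finiteness is routine: since $\FPdim(X)\ge 1$ for every simple $X$, a bound on $\FPdim(\C)=\sum_i\FPdim(X_i)^2$ bounds both the rank and the individual $\FPdim(X_i)$; the fusion coefficients satisfy $N_{ij}^k\le \FPdim(X_i)\FPdim(X_j)$, so only finitely many fusion rings occur; and Ocneanu rigidity furnishes only finitely many tensor categorifications of each such fusion ring.

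The main obstacle is the Galois-conjugation step. One must exhibit a \emph{single} automorphism $\sigma\in\mathrm{Gal}(\overline{\BQ}/\BQ)$ that carries the entire character $|{\cdot}|$ to the entire character $\FPdim$ simultaneously, rather than merely conjugating each eigenvalue of each fusion matrix independently. Everything else is a straightforward application of well-established tools from the theory of fusion categories.
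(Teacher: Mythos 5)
Your reduction to the finiteness of fusion categories of bounded Frobenius--Perron dimension is the right endgame and matches the paper, but the step you yourself flag as ``the main obstacle'' is not merely hard --- it is false. It is not true that $\FPdim(\C)$ is a Galois conjugate of $\dim(\C)$, and correspondingly not every fusion category admits a pseudo-unitary Galois twist. The paper's own Example \ref{smalld}~(v) is a counterexample: for $\C=YL\boxtimes \overline{YL}$ one has $\dim(\C)=\frac{5-\sqrt5}{2}\cdot\frac{5+\sqrt5}{2}=5$, a rational integer whose only conjugate is itself, while $\FPdim(\C)=\left(\frac{5+\sqrt5}{2}\right)^2=\frac{15+5\sqrt5}{2}$. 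The two one-dimensional characters $|\cdot|^2$ and $\FPdim$ of $K(\C)$ are indeed both permuted by the Galois group, but they can lie in \emph{different} orbits, so no single $\sigma$ carries one to the other, and $\max_\sigma|\sigma(d)|$ does not bound $\FPdim(\C)$.

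The paper circumvents this with two weaker but true facts. First, by \cite[Corollary 2.14]{O3}, for \emph{every} conjugate $\gamma$ of $\FPdim(\C)$ the ratio $\dim(\C)/\gamma$ is an algebraic integer; hence the norm $N$ of $\FPdim(\C)$ divides $d^{k}$, where $k$ is the number of conjugates of $\FPdim(\C)$. Second, Lemma \ref{bigf} bounds the rank: $r\le\lfloor f\rfloor$ where $f$ is the largest conjugate of $d$ (this uses that each $[|X|^2]\ge 1$ by the AM--GM inequality for totally positive algebraic integers). Since $k\le r$, the integer $N$ divides $d^{\lfloor f\rfloor}$ and so is at most the largest integer dividing $d^{\lfloor f\rfloor}$; and since all conjugates of $\FPdim(\C)$ are $\ge 1$, one gets $\FPdim(\C)\le N$. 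This yields the bound on $\FPdim(\C)$ in terms of $d$ alone that your argument needs, after which your Ocneanu-rigidity conclusion goes through as stated. Without some substitute of this divisibility-plus-rank argument, your proof does not close.
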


It seems natural to ask next about properties of the image of the global dimension, that is of the following subset of $\BR$:
\begin{equation} \label{fX}
X:=\{ \dim(\C)\; |\; \C \; \mbox{is a fusion category}\}
\end{equation}
It is known that $X$ consists of totally real algebraic integers (and so $X$ is countable), is invariant under the Galois group, and
is contained in the interval $[1,\infty )$, see Section \ref{gldim}. However other properties of the set $X$
are not well understood. For instance it is an open question (asked in \cite[p. 596]{ENO}) whether the set $X$ is discrete. A weaker version of this question (also asked in {\em loc. cit.} and still open) is whether point $1=\dim(\Ve)\in X$ is isolated. An expected positive answer to this question was called {\em categorical property T} in {\em loc. cit.} by analogy with the famous Kazhdan's property T in the group theory. The main result
of this paper is a slightly weaker version of the categorical property T. Namely, recall that a fusion
category $\C$ is {\em spherical} if it admits a spherical structure, see Section \ref{pivot}. All currently known
fusion categories are spherical and it is expected (see e.g. \cite[Conjecture 2.8]{ENO}) that this is always the case. This
conjecture is of great importance for the theory of fusion categories, but it seems to be quite difficult.
We consider
the following subset of the set $X$:
\begin{equation}
X_s:=\{ \dim(\C)\; |\; \C \; \mbox{is a spherical fusion category}\}
\end{equation}
Clearly, the set $X_s$ can be considered as a spherical counterpart of the set $X$ above.

\begin{theorem}[see Theorem \ref{gdim} (i)] \label{main}
Let $\C$ be a spherical fusion category which is not equivalent to $\Ve$. Then
\begin{equation} 
\dim(\C)>\frac43
\end{equation} 
In particular point $1=\dim(\Ve)\in X_s$ is isolated.
\end{theorem}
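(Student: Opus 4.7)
The plan is to argue by contradiction: suppose $\alpha := \dim(\C) \leq 4/3$, and show $\C \iso \Vec$. I would combine three ingredients: structural reductions, a Plancherel-type identity for formal codegrees, and Galois-theoretic arithmetic on algebraic integers.

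First I would perform structural reductions. The universal grading formula $\dim(\C) = |U(\C)| \cdot \dim(\C_{\mathrm{ad}})$, together with $\dim(\C_{\mathrm{ad}}) \geq 1$, forces the grading group $U(\C)$ to be trivial under $\alpha \leq 4/3$ (otherwise $\dim(\C) \geq 2$), so $\C = \C_{\mathrm{ad}}$; and if $\dim(\C_{\mathrm{ad}}) = 1$ the category is pointed of order $|U(\C)| \leq 4/3$, again forcing $\C = \Vec$. Likewise, if $\C$ were pseudo-unitary then $\dim(X_i) = \FPdim(X_i) \geq 1$ for each simple, giving $\dim(\C) \geq \rank(\C)$, which under $\alpha \leq 4/3$ forces $\C = \Vec$. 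After these reductions we may assume $\C = \C_{\mathrm{ad}}$ is non-pseudo-unitary.

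The analytic heart is the Plancherel identity $\sum_\chi (\dim\chi)^2/f_\chi = 1$, where $\chi$ runs over the irreducible characters of $\Gr(\C) \otimes \BC$ and $f_\chi$ is the formal codegree. The spherical dimension is a $1$-dimensional character with $f_{\dim} = \alpha$, so $1/\alpha \geq 3/4$ leaves at most $1/4$ for the remaining characters; peeling off the $1$-dimensional $\FPdim$ character then yields $\FPdim(\C) \geq 4$. Moreover, each Galois conjugate $\sigma(\alpha) = \dim(\C^\sigma)$ is strictly greater than $1$ (since $\C^\sigma \iso \Vec$ would imply $\C \iso \Vec$) and is itself the formal codegree of the character $\sigma \circ \dim$ in $\C$, so one obtains the tighter bound $\sum_i 1/\alpha^{(i)} \leq 1$ over the Galois orbit.

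The hardest step is the arithmetic finale. Using integrality of the elementary symmetric polynomials of the Galois conjugates $\alpha^{(i)}$, combined with the constraints above and each $\alpha^{(i)} \geq 1$, one is left with only a short list of candidate minimal polynomials for $\alpha \in (1, 4/3]$. Degree one immediately gives $\alpha \in \BZ$, hence $\alpha = 1$ and $\C = \Vec$. Higher degrees leave finitely many candidates (for instance $\alpha = 3 - \sqrt 3$ in degree $2$, from $x^2 - 6x + 6$), which one must eliminate by showing that no fusion ring can realize them as dimensions. For degree $2$, this reduces to the classification of rank-$2$ fusion rings, whose complexified Grothendieck rings have the form $\Gr \otimes \BQ = \BQ(\sqrt{m^2+4})$; one checks that no such quadratic field coincides with the putative $\BQ(\alpha)$ (e.g., $m^2 + 4 = 3c^2$ has no integer solution, ruling out $\alpha = 3 - \sqrt 3$). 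In higher degree one combines the Plancherel identity with further consequences of the structure of $\Gr(\C)$ in the same spirit, and this case analysis is what one expects to be the main technical difficulty.
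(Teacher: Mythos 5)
Your overall architecture (reduce to the non-rational case, view the Galois conjugates of $\dim(\C)$ as formal codegrees, and exploit an identity among codegrees) points in the right direction, but the analytic input you rely on is too weak, and the "arithmetic finale" cannot be carried out as described. Everything you use about the codegrees is \emph{linear}: the identity $\sum_E \dim(E)/f_E=1$ (note: the correct form has $\dim(E)$ to the first power) only gives $\sum_i 1/\alpha^{(i)}\le 1$ over the Galois orbit of $\alpha=\dim(\C)$. This constraint, together with total positivity, all conjugates $\ge 1$, integrality of the symmetric functions, and $\FPdim(\C)\ge 4$, does \emph{not} leave a short list of candidates, and does not even exclude values below $4/3$. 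Concretely, $\alpha=3-\sqrt{3}\approx1.268$ has conjugate $3+\sqrt3$ with $\frac1{3-\sqrt3}+\frac1{3+\sqrt3}=1$ exactly, so it passes every test you impose; and more seriously, for each integer $a\ge 6$ and each integer $b$ with $a\le b<(12a-16)/9$ the polynomial $x^2-ax+b$ has both roots $>1$, smaller root in $(1,4/3)$, and $\sum 1/\alpha^{(i)}=a/b\le 1$, giving infinitely many admissible candidates. When the inequality $\sum_i 1/\alpha^{(i)}\le 1$ is strict, the remaining mass can be absorbed by other formal codegrees of arbitrary size, so the degree of $\alpha$ does not bound the rank of $\C$ and your proposed reduction to the rank-$2$ classification is unjustified. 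No finite case analysis is available along these lines.

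The missing idea is a \emph{quadratic} inequality on the codegrees, which is the actual heart of the paper's argument: for a spherical $\C$ one has $\sum_{i}1/f_i^2\le \tfrac12\bigl(1+\tfrac1{f}\bigr)$ for \emph{any} Galois conjugate $f$ of $\dim(\C)$. This is proved by passing to the Drinfeld center: the simple objects $Y_i$ of $\Z(\C)$ whose image under the forgetful functor contains $\be$ have dimensions $\dim(\C)/f_i$ and twist $1$, and comparing $\dim(\Z(\C))=\dim(\C)^2$ with the Gauss sum $G(\Z(\C))=\dim(\C)$ (bounding the contribution of the remaining simples by their absolute values) yields the inequality. Keeping only the terms for $\dim(\C)$ and for a conjugate $f\ne\dim(\C)$ gives
\begin{equation*}
\frac1{\dim(\C)^2}\le \frac12+\frac1{2f}-\frac1{f^2}=\frac9{16}-\Bigl(\frac14-\frac1f\Bigr)^2\le\frac9{16},
\end{equation*}
hence $\dim(\C)\ge 4/3$, with equality impossible since $4/3$ is not an algebraic integer. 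Note that $3-\sqrt3$ is killed precisely by this quadratic bound ($\frac1{(3-\sqrt3)^2}+\frac1{(3+\sqrt3)^2}=\frac23>\frac12(1+\frac1{3+\sqrt3})$), which illustrates that the quadratic inequality carries information genuinely beyond the linear one. Your proposal contains no analogue of this step and no use of the center, Gauss sums, or twists, so the gap is essential rather than technical.
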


The constant $\frac43$ in Theorem \ref{main} is not optimal. In the Appendix (joint with P.~Etingof)
we show that for a spherical fusion category $\C \not \simeq \Ve$ we have
$$\dim(\C)\ge \frac{5-\sqrt{5}}2=\dim(YL)\approx 1.381966$$ 
where $YL$ is the (non-unitary) Yang-Lee category, see e.g. \cite[Exercise 4.10.7]{EGNO}. 
Note that the point $\dim(YL)\in X_s$ is isolated by Corollary \ref{kiz2} or by more precise Proposition \ref{mainpro}.

We will also prove Theorem \ref{codeg} which extends Theorem \ref{main} to other numerical
invariants of the category $\C$, so called {\em formal codegrees}, see Section \ref{fcgen}.


\subsection{Open questions}
\begin{question} Find a counterpart of Theorem \ref{main} for a not necessarily spherical fusion
categories.
\end{question}





It follows from Corollary \ref{kiz2} that any points $x\in X_s$ with $x<\sqrt{2}$ is isolated. Thus we have
\begin{question} \label{kiz2lim} 
Prove (or disprove) that $\sqrt{2}$ is not a limit point of $X_s$.
\end{question}

Note that it is clear that $\sqrt{2}\not \in X_s$ since $\sqrt{2}$ is not totally positive. 
Actually we don't know any point $x\in X_s$ which is less than $\sqrt{2}$ but greater than
$\dim(YL)$. Thus we ask

\begin{question} \label{bkiz2} 
Are there any points $x\in X_s$ with $\dim(YL)<x<\sqrt{2}$?
\end{question}

A more ambitious version of this question is to find a next point (or limit point) of $X_s$ after $\dim(YL)$. 
We suspect that the answer is given by the smallest root of the polynomial $t^3-14t^2+49t-49$ 
(approximately $1.84117$) which 
is the dimension of some fusion category $\C_3$ of rank 3 associated with quantum $\mathfrak{so}_3$
at $7$th root of 1, see \cite[4.3]{O4}.

\subsection{Acknowledgements} It is my great pleasure to thank Pavel Etingof for his comments which
allowed me to significantly improve the results of this note.
This work was partially supported by the NSF grant DMS-1702251. The study also has been funded by the Russian Academic
Excellence Project '5-100'.



\section{Preliminaries}
\subsection{Global dimension} \label{gldim}
For a fusion category $\C$ we will denote by $\O(\C)$ the set of isomorphism classes of simple objects
of $\C$. For an object $X\in \C$ let $\phantom{}^*X, X^*$ denote the right and left duals of $X$. Recall
that for any morphism $f: X\to X^{**}$ one defines its trace $\Tr(f)\in \BC$, see e.g. \cite[4.7]{EGNO}.

For any $X\in \O(\C)$ there exists a unique up to scaling isomorphism $a_X: X\to X^{**}$. A choice 
of $a_X$ gives an isomorphism $(a_X^*)^{-1}: X^*\to X^{***}=(X^*)^{**}$. It is clear that the following
scalar introduced by M.~M\"uger \cite{Mu} and called {\em squared norm} of $X$ in \cite{ENO}
is independent of the choice of $a_X$:
$$ |X|^2:=\Tr(a_X)\Tr((a_X^*)^{-1})\in \BC^\times .$$

Now the global dimension of $\C$ is defined as 
\begin{equation}\label{defdim}
\dim(\C)=\sum_{X\in \O(\C)}|X|^2.
\end{equation}

It was proved in \cite[Theorem 2.3 and Remark 2.5]{ENO} that $|X|^2$ is a totally positive algebraic
integer. It follows that
$\dim(\C)$ is a real algebraic integer $\ge 1$;
moreover $\dim(\C)=1$ if and only if $\C \simeq \Ve$. 

For any automorphism $\sigma$ of the field $\BC$ let $\C^\sigma$ be the twist of $\C$ by $\sigma$,
i.e. $\C^\sigma =\C$ as a category equipped with tensor product but the associativity constraint 
of $\C^\sigma$ is obtained from the one for $\C$ by applying $\sigma$. It is clear that
$\dim(\C^\sigma)=\sigma (\dim(\C))$. In particular the set $X$ in \eqref{fX} is Galois invariant.

\subsection{Spherical fusion categories} \label{pivot}
A {\em pivotal structure} on fusion category $\C$ is an
isomorphism of tensor functors $\delta: \id \to ()^{**}$, see e.g. \cite[4.7]{EGNO}, i.e.
functorial tensor isomorphism $\delta_X: X\to X^{**}$ for any $X\in \C$. In presence of such a structure
one defines {\em quantum dimensions} of objects $\dim_\delta (X)=\Tr(\delta_X)\in \BC$ which are
additive and multiplicative, see e.g. \cite[Proposition 4.7.12]{EGNO}.

A pivotal
structure $\delta$ is called {\em spherical} if $\dim_\delta (X)=\dim_\delta (X^*)$ for any $X\in \C$. 
A fusion category $\C$ is {\em sphericalizable} if it has at least one spherical structure.
All currently known fusion categories are sphericalizable.
A fusion category $\C$ equipped with a choice of spherical structure is called {\em spherical}. 
By abusing language we will consider these two notions as synonymous. Namely for
each sphericalizable fusion category we will choose and fix one spherical structure $\delta$;
we will often omit $\delta$ from notations, say by writing $\dim(X)$ instead of $\dim_\delta (X)$.

It is known that in a spherical fusion category $\dim(X)\in \BR$ for any $X\in \C$ and
$|X|^2=\dim(X)^2$ for any $X\in \O(\C)$, see \cite[Corollary 2.10]{ENO}. In particular
$$\dim(\C)=\sum_{X\in \O(\C)}\dim(X)^2.$$

\subsection{Formal codegrees} \label{fcgen}
Let $K(\C)$ be the Grothendieck ring of a fusion category $\C$ equipped with a basis $\{ b_i\}$ consisting of the classes of simple objects in $\C$. For any $b_i$ representing the isomorphism class of $L\in \C$ let $b_i'$ denote the class of the dual object $L^*$.

Let $E$ be an irreducible $\BC-$linear representation of the ring $K(\C)$. It was shown in 
\cite[Chapter 19]{Lu}
that the following element of $K(\C)\ot \BC$ is central:
$$\alpha_E=\sum_i\Tr(b_i,E)b_i'.$$
Moreover, the element $\alpha_E$ acts by some scalar $f_E$ on the irreducible representation $E$
and by zero on any irreducible representation not isomorphic to $E$. The scalars $f_E$ as $E$
runs over all irreducible representations of $K(\C)$ are called {\em formal codegrees} of $\C$,
see \cite{Ofc}. It is known that the formal codegrees are totally real algebraic integers which are
$\ge 1$, see \cite[Remark 2.12]{O3}. By \cite[Proposition 2.10]{O3} we have
\begin{equation}\label{sum1}
\sum_{E\in \Irr(K(\C))}\frac{\dim(E)}{f_E}=1.
\end{equation}

A one dimensional representation of $K(\C)$ is the same as ring homomorphism $\phi : K(\C)\to \BC$.
Such representation is automatically irreducible and the corresponding formal codegree $f_\phi$ is
given by
$$f_\phi =\sum_{X\in \O(\C)}\phi(X)\phi(X^*)=\sum_{X\in \O(\C)}|\phi(X)|^2$$
In particular, global dimension $\dim(\C)$ and its Galois conjugates are formal codegrees of $\C$.
Another example is given by the {\em Frobenius-Perron dimension} $\FPdim(\C)$ 
(see e.g. \cite[4.5]{EGNO}) and its Galois conjugates.

\section{Pseudo-unitary inequality without pseudo-unitarity}
\subsection{}
Let $f_1, f_2, \ldots ,f_r$ be the formal codegrees of $\C$. 
The following result was established in \cite[Theorem 2.21]{O3} for a pseudo-unitary $\C$ and was called
"pseudo-unitary inequality" there. This is unfortunate as we show now that pseudo-unitarity assumption is not needed.

\begin{theorem} For a spherical fusion category $\C$ the formal codegrees satisfy the following inequality:
\begin{equation}\label{pseudo}
\sum_{i=1}^r\frac1{f_i^2}\le \frac12\left(1+\frac1{\dim(\C)}\right).
\end{equation}
\end{theorem}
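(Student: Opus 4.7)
The plan is to adapt the proof of \cite[Theorem 2.21]{O3} by identifying where pseudo-unitarity is actually invoked and showing that it can be replaced by sphericity alone. The crucial observation, already recorded in Section~\ref{fcgen} of the preliminaries, is that in a spherical fusion category the quantum dimension defines a ring homomorphism $\phi_{\dim}\colon K(\C)\to\BR$, $X\mapsto \dim(X)$, whose formal codegree equals $\sum_{X\in\O(\C)} \dim(X)\dim(X^*)=\sum_{X\in\O(\C)}\dim(X)^2=\dim(\C)$. Thus $\dim(\C)$ is itself always among the $f_i$, and the one-dimensional representation $\phi_{\dim}$ plays the role that the Frobenius--Perron character $\phi_{\FP}$ (with $f_{\phi_{\FP}}=\FPdim(\C)=\dim(\C)$) played in the pseudo-unitary proof.

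Concretely, I would proceed as follows. First, single out the contribution $1/\dim(\C)^2$ coming from $\phi_{\dim}$ in $\sum_i 1/f_i^2$. Second, rewrite \eqref{sum1} by separating this distinguished character: since $\phi_{\dim}(\be)=1$ and $f_{\phi_{\dim}}=\dim(\C)$, the remaining mass satisfies
\[
\sum_{E\neq \phi_{\dim}}\frac{\dim(E)}{f_E}=1-\frac{1}{\dim(\C)}.
\]
Third, control $\sum_{E\neq \phi_{\dim}} 1/f_E^2$ by the appropriate Cauchy--Schwarz or regular-representation trace manipulation on $K(\C)\otimes\BC$, mimicking the bound from \cite{O3}. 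Combining the two pieces then yields $\sum_i 1/f_i^2\le\tfrac12(1+1/\dim(\C))$.

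The main obstacle I anticipate is confirming that the argument of \cite{O3} really uses only the \emph{squared} positivity $|X|^2=\dim(X)^2>0$ (which holds for any spherical category) and never the pointwise positivity $\dim(X)>0$ that characterises pseudo-unitarity. I expect this to be mild: the conclusion \eqref{pseudo} is quadratic in the $f_i$, so the proof naturally groups individual quantum dimensions only through $\dim(X)^2$, and wherever $\phi_{\FP}$ enters the pseudo-unitary proof one can simply substitute $\phi_{\dim}$. If some step genuinely relies on signed positivity, the fallback is to replace $\dim(X)$ by $|X|^2$ and run the corresponding $L^2$ estimate over squared norms, which remains valid for any spherical $\C$.
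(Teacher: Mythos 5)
Your proposal misidentifies the mechanism behind the inequality, and the route you sketch cannot be completed. The correct observations you make --- that $\dim(\C)$ is itself a formal codegree via the character $X\mapsto\dim(X)$, and that \eqref{sum1} lets you split off its contribution --- are true but insufficient: inequality \eqref{pseudo} is \emph{not} a formal consequence of \eqref{sum1} together with $f_E\ge 1$ and $\dim(E)\ge 1$. For instance, the data $d=f_1=9$, $f_2=9/8$, $\dim(E_1)=\dim(E_2)=1$ satisfies $\sum_E\dim(E)/f_E=1$ yet has $\sum_E 1/f_E^2=65/81$, far above $\tfrac12(1+\tfrac19)=5/9$. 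So no Cauchy--Schwarz or $L^2$ manipulation carried out purely inside $K(\C)\otimes\BC$ can produce the factor $\tfrac12$; some genuinely categorical input is required, and your outline never names it.

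That input is the Drinfeld center. The paper's proof (following the structure of \cite[Theorem 2.21]{O3}) uses that $\Z(\C)$ contains simple objects $Y_1,\dots,Y_r$ with $\dim(Y_i)=\dim(\C)/f_i$ and twist $1$ (\cite[Theorems 2.13 and 2.5]{O3}), writes down the two identities
$\dim(\C)^2=\sum_i\dim(Y_i)^2+\sum_j\dim(Z_j)^2$ and $\dim(\C)=G(\Z(\C))=\sum_i\dim(Y_i)^2+\sum_j\theta_j\dim(Z_j)^2$ for the remaining simple objects $Z_j$ with twists $\theta_j$, and \emph{adds} them; since $|\theta_j|=1$, the $Z$-terms in the sum are $\ge 0$, which yields $\dim(\C)^2+\dim(\C)\ge 2\sum_i\dim(\C)^2/f_i^2$. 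The factor $\tfrac12$ is exactly the averaging of the global-dimension identity with the Gauss-sum identity. Your concluding discussion of whether the argument uses $\dim(X)>0$ versus $\dim(X)^2>0$ is also aimed at the wrong place: the point of dropping pseudo-unitarity is simply that the formulas $\dim(Y_i)=\dim(\C)/f_i$, $\theta(Y_i)=1$, $\dim(\Z(\C))=\dim(\C)^2$ and $G(\Z(\C))=\dim(\C)$ already hold for an arbitrary spherical $\C$. Without the passage to $\Z(\C)$ and the Gauss sum, the proof does not go through.
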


\begin{proof} It is known (see \cite[Theorem 2.13]{O3}) that the Drinfeld center $\Z(\C)$ contains simple
objects $Y_1, Y_2, \ldots , Y_r$ of dimensions $\dim(Y_i)=\frac{\dim(\C)}{f_i}$ (the objects 
$Y_i$ are precisely simple objects $Y$ of $\Z(\C)$ such that $F(Y)$ contains $\be$ with
nonzero multiplicity where $F: \Z(\C)\to \C$ is the forgetful functor). It is also known that
the twists of objects $Y_i$ equal 1, see \cite[Theorem 2.5]{O3}. 

Let $Z_1, Z_2, \ldots Z_s$ be the simple objects of $\Z(\C)$ distinct from $Y_i$; let
$\theta_1, \theta_2, \ldots \theta_s$ be their twists (recall that $\theta_i$ are roots of 1).

We have the following formulas for the global dimension and Gauss sum of $\Z(\C)$:
\begin{equation}\label{notgauss}
\dim(\C)^2=\dim(\Z(\C))=\sum_{i=1}^r\dim(Y_i)^2+\sum_{j=1}^s\dim(Z_j)^2,
\end{equation} 
\begin{equation}\label{gauss}
\dim(\C)=G(\Z(\C))=\sum_{i=1}^r\dim(Y_i)^2+\sum_{j=1}^s\theta_j\dim(Z_j)^2.
\end{equation} 
It follows from \eqref{gauss} that $\sum_{j=1}^s\theta_j\dim(Z_j)^2\in \BR$ and
\begin{equation}
\sum_{j=1}^s\theta_j\dim(Z_j)^2\ge -|\sum_{j=1}^s\theta_j\dim(Z_j)^2|\ge -\sum_{j=1}^s\dim(Z_j)^2.
\end{equation}
Thus adding \eqref{notgauss} and \eqref{gauss} we get
\begin{equation}
\dim(\C)^2+\dim(\C)=2\sum_{i=1}^r\dim(Y_i)^2+\sum_{j=1}^s\dim(Z_j)^2+\sum_{j=1}^s\theta_j\dim(Z_j)^2\ge 
\end{equation}
$$2\sum_{i=1}^r\dim(Y_i)^2=2\sum_{i=1}^r\frac{\dim(\C)^2}{f_i^2}$$

which is equivalent to desired inequality.
\end{proof}

\begin{remark} \label{Gaps}
By applying Galois automorphisms to the category $\C$ we see that we can replace
the right hand side of \eqref{pseudo} by $\frac12(1+\frac1f)$ where $f$ is arbitrary Galois conjugate
of $\dim(\C)$.
\end{remark}

It is natural to ask whether the inequality \eqref{pseudo} can be improved. The following example
shows that we cannot hope to improve constant $\frac12$ in \eqref{pseudo} by something better than 
$\frac14$ even if finitely many exceptions are allowed.

\begin{example} Let $G$ be a finite group and let $\C =\Rep(G)$ be the category of finite dimensional
$G-$modules. Then $K(\Rep(G))$ is commutative, so all representations of $K(\Rep(G))$ are one
dimensional. It is well known that all homomorphisms $K(\Rep(G))\to \BC$ are labelled by the conjugacy classes in $G$ and the orthogonality relations for the characters immediately imply that
the formal codegrees are $f_i=\frac{|G|}{|C_i|}$ where $C_i$ is a conjugacy class. Thus in this case\eqref{pseudo} is equivalent to
$$\sum_i|C_i|^2\le \frac12|G|(|G|+1).$$
Note that for the dihedral group $G=D_p$ with odd prime $p$ one has
$$\sum_i|C_i|^2=p^2+\frac12(p-1)\cdot 4+1=\frac14|G|^2+|G|-1.$$
Equivalently,
$$\sum_i\frac1{f_i^2}=\frac14+\frac1{\dim(\C)}-\frac1{\dim(\C)^2}.$$
\end{example}

\section{Bounds for formal codegrees}\label{bound}
\subsection{Global dimensions}
We start with a bound for the global dimension:

\begin{theorem} \label{gdim}
Let $\C$ be a spherical fusion category which is not equivalent to $\Ve$.

\emph{(i)} We have
\begin{equation} 
\dim(\C)>\frac43.
\end{equation} 

\emph{(ii)} Assume that $\dim(\C)$ has $k>1$
Galois conjugates. Then
\begin{equation}\label{kbound}
\dim(\C)\ge \sqrt{\frac{16k-16}{8k-7}}.
\end{equation} 
\end{theorem}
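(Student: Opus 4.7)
The plan is to deduce both statements from the pseudo-unitary inequality \eqref{pseudo} together with its Galois-refinement in Remark~\ref{Gaps}. Write $d:=\dim(\C)$ and let $d_1=d,d_2,\ldots,d_k$ denote the Galois conjugates of $d$. Since the multiset $\{f_i\}$ of formal codegrees of $\C$ is Galois-stable, each $d_j$ is itself a formal codegree, and in particular $d_j\geq 1$ for every $j$. Setting $d_{\max}:=\max_j d_j$, Remark~\ref{Gaps} provides the sharpest form of \eqref{pseudo}, namely
$$\sum_{i=1}^r\frac{1}{f_i^2}\;\leq\;\frac12\left(1+\frac{1}{d_{\max}}\right).$$

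For part (ii) I would restrict the left-hand side above to the subcollection $\{d_j\}$ and use $1/d_j^2\geq 1/d_{\max}^2$ for each $j\neq 1$ to obtain
$$\frac{1}{d^2}+\frac{k-1}{d_{\max}^2}\;\leq\;\frac12\left(1+\frac{1}{d_{\max}}\right).$$
Writing $u:=1/d_{\max}$, this says $1/d^2\leq h(u)$ where $h(u)=\tfrac{1+u}{2}-(k-1)u^2$. Since $h$ is a concave parabola, it attains its maximum on $\BR$ at $u^\ast=1/(4(k-1))$ with value $\tfrac12+\tfrac{1}{16(k-1)}=(8k-7)/(16(k-1))$. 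Therefore $1/d^2\leq (8k-7)/(16k-16)$, which is exactly inequality \eqref{kbound}.

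For part (i) I would split into cases according to $k$. If $k=1$, then $d$ is a rational algebraic integer, so $d$ is a positive integer; the only possibility $d=1$ forces $\C\simeq\Vec$, whence $d\geq 2>4/3$. If $k\geq 2$, part (ii) yields $d\geq \sqrt{(16k-16)/(8k-7)}$, and an elementary check shows the right-hand side is increasing in $k$ with minimum value $4/3$ attained at $k=2$. Moreover $d=4/3$ would force $d\in\BQ$ and hence $k=1$, contradicting $k\geq 2$; so the strict inequality $d>4/3$ holds.

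The main work is the concave-quadratic optimization of $h(u)$, which is routine. The one subtlety is justifying Remark~\ref{Gaps} at the specific conjugate $f=d_{\max}$: this is fine because applying \eqref{pseudo} to the twist $\C^\sigma$ for any Galois automorphism $\sigma$ leaves the multiset $\{f_i\}$ (and therefore the left-hand side of \eqref{pseudo}) unchanged while sending $d$ to $\sigma(d)$, so choosing $\sigma$ with $\sigma(d)=d_{\max}$ produces the claimed sharper right-hand side. I do not expect any obstacle beyond this bookkeeping.
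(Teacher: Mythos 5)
Your proposal is correct and follows essentially the same route as the paper: both rest on the pseudo-unitary inequality sharpened via Remark~\ref{Gaps} at the largest conjugate, keep only the $k$ terms of the codegree sum coming from the Galois orbit of $\dim(\C)$, and maximize the resulting concave quadratic in $1/d_{\max}$ to get \eqref{kbound}. Your derivation of (i) from (ii) by monotonicity in $k$ (rather than the paper's direct completing-the-square with a single extra conjugate) is only a cosmetic reorganization, and your handling of the rational case and of strictness at $4/3$ matches the paper's in substance.
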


\begin{proof} (i) Recall that $\dim(\C)$ is an algebraic integer. Thus if $\dim(\C)\in \BQ$ then
$\dim(\C)\in \BZ$, so $\dim(\C)\ge 2$ and the bound holds.

Assume now that $\dim(\C)\not \in \BQ$. Thus there exists at least one Galois conjugate $f$ of $\dim(\C)$
distinct from $\dim(\C)$. Then by Remark \ref{Gaps} we have:
\begin{equation}
\frac1{\dim(\C)^2}+\frac1{f^2}\le \sum_{i=1}^r\frac1{f_i^2}\le \frac12(1+\frac1f),
\end{equation}
whence
\begin{equation}\label{916}
\frac1{\dim(\C)^2}\le \frac12+\frac12\cdot \frac1f-\frac1{f^2}=\frac9{16}-(\frac14-\frac1f)^2\le \frac9{16}.
\end{equation}
It follows that $\dim(\C)\ge \frac43$; however the equality is impossible since $\frac43$ is not an
algebraic integer.

(ii) Let $f$ be the largest of the conjugates of $\dim(\C)$. Then
\begin{equation}
\frac1{\dim(\C)^2}+(k-1)\frac1{f^2}\le \sum_{i=1}^r\frac1{f_i^2}\le \frac12(1+\frac1f),
\end{equation}
that is $\frac1{\dim(\C)^2}\le \frac12+\frac12\cdot \frac1f-(k-1)\frac1{f^2}$. The maximum value of
the right hand side is $\frac{8k-7}{16k-16}$ (achieved when $f=4(k-1)$), which implies the result.
\end{proof}

\begin{example}\label{rank2}
Let $K_n=\BZ[X]/\langle X^2=nX+1\rangle$ be based ring of rank 2. 
There are 2 homomorphisms $K_n\to \BC$ with formal codegrees
$\frac{n^2+4\pm n\sqrt{n^2+4}}2$ which are Galois conjugates if $n\ne 0$. 
One observes that $\frac{n^2+4-n\sqrt{n^2+4}}2<\frac43$ for $n>1$.
 Thus $K_n$ is not categorifiable by a spherical fusion category for $n>1$.
This falls short from the main result of \cite{O2} which states that $K_n$ is not categorifiable for $n>1$.
\end{example}

\begin{corollary}\label{kiz2} 
Let $d$ be a real number with $d<\sqrt{2}$. Then the set $X_s\cap [1,d]\subset \BR$ is finite.
\end{corollary}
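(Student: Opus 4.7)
The plan is to bound both the degree of $x$ over $\BQ$ and the moduli of the Galois conjugates of $x$, after which the corollary follows from the standard fact that only finitely many monic integer polynomials of bounded degree have bounded coefficients.

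First I would bound the degree $k = [\BQ(x):\BQ]$. The case $k = 1$ would force $x$ to be a positive integer, hence $x \ge 2 > d$, a contradiction. So $k \ge 2$, and Theorem \ref{gdim}(ii) yields $x \ge \sqrt{(16k-16)/(8k-7)}$. This lower bound is strictly increasing in $k$ with limit $\sqrt{2}$, so the assumption $x \le d < \sqrt{2}$ produces an explicit bound $k \le K(d) < \infty$.

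Next I would bound the largest Galois conjugate $M$ of $x$. Note $M = \dim(\C^\sigma)$ for an appropriate twist of $\C$, so $M \ge 1$ and $M$ is a formal codegree of $\C$ (as are all Galois conjugates of $\dim(\C)$, since they arise from conjugates of the character $[X] \mapsto \dim(X)$). Applying the pseudo-unitary inequality \eqref{pseudo} in the sharpened form of Remark \ref{Gaps} with $f = M$, and retaining only the two terms $1/x^2$ and $1/M^2$ on the left-hand side, gives
$$\frac{1}{x^2} + \frac{1}{M^2} \le \frac{1}{2}\left(1 + \frac{1}{M}\right).$$
Since $x \le d$, weakening $1/x^2$ to $1/d^2$ and setting $u = 1/M$ rewrites this as the quadratic inequality
$$u^2 - \frac{u}{2} + \left(\frac{1}{d^2} - \frac{1}{2}\right) \le 0.$$
The hypothesis $d < \sqrt{2}$ makes the constant term $1/d^2 - 1/2$ strictly positive, so the two roots of the quadratic have positive sum and positive product and are therefore both strictly positive; $u$ lies in the closed interval between them, so $u$ is bounded below by a positive constant depending only on $d$, giving $M \le M_0(d) < \infty$. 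Since every Galois conjugate of $x$ is at most $M$, all conjugates lie in $[1, M_0(d)]$.

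Combining, the minimal polynomial of $x$ is a monic integer polynomial of degree at most $K(d)$ whose coefficients, being elementary symmetric functions of the conjugates, are bounded in terms of $M_0(d)$ and $K(d)$. Only finitely many such polynomials exist, hence only finitely many values of $x$ are possible. The main subtlety is the second step: routine applications of \eqref{pseudo} produce only lower bounds on conjugates (as in the proof of Theorem \ref{gdim}(ii)), and it is precisely the strict positivity of $1/d^2 - 1/2$---equivalent to $d < \sqrt{2}$---that converts the inequality into an upper bound on $M$. Consistently, $M_0(d) \to \infty$ as $d \to \sqrt{2}^-$, which explains why $\sqrt{2}$ is the natural threshold and why Question \ref{kiz2lim} remains open.
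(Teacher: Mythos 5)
Your proof is correct and follows essentially the same route as the paper: bound the degree of $x$ via Theorem \ref{gdim}(ii) using that $\sqrt{\frac{16k-16}{8k-7}}\to\sqrt{2}$, bound the largest conjugate via \eqref{pseudo} and Remark \ref{Gaps} using that $d<\sqrt{2}$ makes $\frac{2}{d^2}-1>0$, and conclude by finiteness of monic integer polynomials of bounded degree with bounded coefficients. Your only deviation is retaining the $1/M^2$ term (giving a quadratic in $u=1/M$ rather than the paper's simpler linear bound $\frac{1}{f}\ge\frac{2}{d^2}-1$), which is a harmless minor sharpening.
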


\begin{proof} Assume $\dim(\C)\le d$. Then we get from inequality \eqref{kbound} 
$d\ge \sqrt{\frac{16k-16}{8k-7}}$ which gives an upper bound for $k$ as $\lim_{k\to \infty}\sqrt{\frac{16k-16}{8k-7}}=\sqrt{2}.$

Let $f$ be the largest of conjugates of $\dim(C)$. By Remark \ref{Gaps} we have 
$$\frac1{d^2}\le \frac1{\dim(\C)^2}\le \sum_{i=1}^r\frac1{f_i^2}\le \frac12(1+\frac1f),$$
that is $\frac1f\ge \frac2{d^2}-1$ which gives an upper bound for $f$.

Thus the minimal polynomial of algebraic integer $\dim(\C)$ has bounded degree and bounded coefficients (since the coefficients are elementary symmetric functions of the conjugates of $\dim(\C)$).
The result follows.
\end{proof}


\subsection{More bounds}
It is easy to generalize arguments in the proof of Theorem \ref{gdim} to get a bound for arbitrary
formal codegree of $\C$.

\begin{theorem} \label{codeg}
Let $\C$ be a spherical fusion category with $r$ simple objects. Then
any formal codegree $f_i$ of $\C$ satisfies
\begin{equation}\label{codef}
f_i\ge \sqrt{\frac{2r}{r+1}}.
\end{equation} 
In particular, if $\C \not \simeq \Ve$ then
\begin{equation} \label{coden}
f_i>\sqrt{\frac85}\approx 1.2649
\end{equation} 
\end{theorem}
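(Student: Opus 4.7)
The plan is to follow the template of Theorem~\ref{gdim}, combining the pseudo-unitary inequality \eqref{pseudo} with the flexibility granted by Remark~\ref{Gaps}: we choose the Galois conjugate $f$ of $\dim(\C)$ appearing in the right-hand side to be $\FPdim(\C)$. This is a legitimate choice because $\FPdim(\C)$ lies in the Galois orbit of $\dim(\C)$: one obtains it by passing to the pseudo-unitary Galois twist $\C^\sigma$ of $\C$, which satisfies $\dim(\C^\sigma) = \FPdim(\C^\sigma) = \FPdim(\C)$. Moreover, since $\FPdim(X) \ge 1$ for every simple object $X$, we have $\FPdim(\C) = \sum_X \FPdim(X)^2 \ge r$, so the chosen conjugate $f$ satisfies $f \ge r$.

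With this choice, Remark~\ref{Gaps} gives
\begin{equation*}
\sum_i \frac{1}{f_i^2} \le \frac{1}{2}\left(1+\frac{1}{f}\right) \le \frac{1}{2}\left(1+\frac{1}{r}\right) = \frac{r+1}{2r}.
\end{equation*}
For part (i), combining this with $\frac{1}{f_j^2} \le \sum_i \frac{1}{f_i^2}$ immediately yields $f_j^2 \ge \frac{2r}{r+1}$, establishing \eqref{codef}.

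For part (ii), assume $\C \not\simeq \Ve$, so $r \ge 2$ and $f \ge 2$. If $f_j$ coincides (as a codegree) with $f$, then trivially $f_j = f \ge 2 > \sqrt{8/5}$. Otherwise, $\frac{1}{f_j^2}$ and $\frac{1}{f^2}$ appear as distinct terms in $\sum_i \frac{1}{f_i^2}$, so we may separate them:
\begin{equation*}
\frac{1}{f_j^2} \le \frac{1}{2}\left(1+\frac{1}{f}\right) - \frac{1}{f^2} = \frac{1}{2}+\frac{1}{2f}-\frac{1}{f^2}.
\end{equation*}
The right-hand side, as a function of $f$ on $[1,\infty)$, is maximized at $f = 4$ with value $\frac{9}{16}$; hence $f_j \ge \frac{4}{3}$. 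Since $f_j$ is an algebraic integer while $\frac{4}{3}$ is not, this forces $f_j > \frac{4}{3} > \sqrt{8/5}$, which gives \eqref{coden}.

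The main obstacle is the invocation of the standard but non-trivial fact that $\FPdim(\C)$ is a Galois conjugate of $\dim(\C)$ for any spherical fusion category (equivalently, that $\C$ admits a pseudo-unitary Galois twist); this is what guarantees a large conjugate $f \ge r$ to feed into Remark~\ref{Gaps}. Once that ingredient is accepted, the rest of the argument reduces to the same one-variable maximization of $\tfrac{1}{2}+\tfrac{1}{2x}-\tfrac{1}{x^2}$ on $[1,\infty)$ already performed in the proof of Theorem~\ref{gdim}.
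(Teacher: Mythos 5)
Your argument hinges on the claim that $\FPdim(\C)$ is a Galois conjugate of $\dim(\C)$ --- equivalently, that $\C$ admits a pseudo-unitary Galois twist --- so that it may be substituted for $f$ in Remark \ref{Gaps}. This claim is false in general. A Galois twist $\C^\sigma$ has the same Grothendieck ring as $\C$, hence $\FPdim(\C^\sigma)=\FPdim(\C)$, while $\dim(\C^\sigma)=\sigma(\dim(\C))$; so your claim would force $\FPdim(\C)$ to lie in the Galois orbit of $\dim(\C)$. The spherical category $YL\bt\overline{YL}$ of Example \ref{smalld}(v) has $\dim(\C)=5$ (rational, hence equal to its only conjugate) but $\FPdim(\C)=\frac{15+5\sqrt{5}}{2}$, so $\FPdim(\C)$ is not a conjugate of $\dim(\C)$ and no Galois twist of this category is pseudo-unitary. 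Remark \ref{Gaps} only licenses Galois conjugates of $\dim(\C)$ on the right-hand side of \eqref{pseudo}, so the choice $f=\FPdim(\C)$ is not available, and with it falls the inequality $f\ge r$ that drives both halves of your proof.

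The correct substitute --- and the route the paper takes --- is Lemma \ref{bigf}: the largest Galois conjugate $f$ of $\dim(\C)$ itself satisfies $f\ge r$. This is proved by averaging over the Galois orbit and using that each $|X|^2$ is a totally positive algebraic integer, so that $[|X|^2]\ge 1$ by the arithmetic--geometric mean inequality. With that lemma in place of your premise, your derivation of \eqref{codef} becomes identical to the paper's. Your dichotomy for \eqref{coden} (either $f_i$ is the codegree $f$ itself, whence $f_i=f\ge r\ge 2$, or the terms $1/f_i^2$ and $1/f^2$ are distinct summands, whence $1/f_i^2\le \frac{9}{16}$ and $f_i\ge \frac43>\sqrt{8/5}$) would then also go through, and is in fact cleaner than the paper's treatment, which obtains \eqref{coden} from \eqref{codef} only for $r\ge 4$ and falls back on the classifications of fusion categories of rank $2$ and $3$ for the remaining cases. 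As written, however, the proof rests on a false statement and is not correct.
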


\begin{proof}

We start proving Theorem \ref{codeg} with the following

\begin{lemma} \label{bigf}
Let $\C$ be a fusion category with $r$ simple objects. Then there exists
a Galois conjugate $f$ of $\dim(\C)$ such that $f\ge r$. 
\end{lemma}

\begin{proof} For an algebraic integer $a$ let $[a]\in \BQ$ be the average of the Galois conjugates of $a$.
It is easy to see that $a\mapsto [a]$ is a $\BQ-$linear functional on the $\BQ-$vector space of algebraic numbers. Also if $a$ is a totally positive algebraic integer then by the arithmetic and geometric mean
inequality one has $[a]\ge 1$. This applies to $a=|X|^2$ for $X\in \O(\C)$, see Section \ref{gldim}.
Let $f$ be the largest conjugate of $\dim(\C)$. Applying \eqref{defdim} we get
\begin{equation}\label{agm}
f\ge [\dim(\C)]=\sum_{X\in \O(\C)}[|X|^2]\ge \sum_{X\in \O(\C)}1=r.
\end{equation}
\end{proof}

\begin{remark}\label{bigfe}
 The equality $f=r$ in Lemma \ref{bigf} holds only in the case when $\C$ is a pointed
fusion category of dimension $r$. Namely in this case we see from \ref{agm} that $|X|^2=1$ 
for any $X\in \O(\C)$. This implies that the dimension of any simple object of the {\em pivotalization}
$\tilde \C$ of $\C$ (see \cite[Definition 7.21.9]{EGNO}) is $\pm 1$. This in turn implies that
the Frobenius-Perron dimensions of simple objects of $\tilde \C$ equal 1, see 
\cite[Exercise 9.6.2]{EGNO}. Thus $\tilde \C$, and hence $\C$, are pointed, see
\cite[Corollary 3.3.10]{EGNO}.
\end{remark}

Let us prove Theorem \ref{codeg}. Using Remark \ref{Gaps} and Lemma \ref{bigf} we have
\begin{equation}
\frac1{f_i^2}\le \sum_{i=1}^r\frac1{f_i^2}\le \frac12(1+\frac1f)\le \frac12(1+\frac1r)=\frac{r+1}{2r}.
\end{equation}
This implies \eqref{codef}. Thus \eqref{coden} holds for $r\ge 4$; in the remaining cases $r=2$
and $r=3$ one verifies \eqref{coden} case by case using classifications of fusion categories of rank $2$ (see \cite{O2} or Example \ref{rank2} below) and $3$ (see \cite{O3}).
\end{proof}

\begin{corollary}\label{cFP}
 Any Galois conjugate of the Frobenius-Perron dimension $\FPdim(\C)$
of a nontrivial spherical fusion category $\C$ is greater than $\sqrt{\frac85}$.
\end{corollary}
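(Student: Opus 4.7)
The plan is to reduce the corollary to Theorem \ref{codeg}, which already asserts that every formal codegree of a nontrivial spherical fusion category exceeds $\sqrt{8/5}$. The only remaining content is to verify that every Galois conjugate of $\FPdim(\C)$ appears among the formal codegrees $f_i$ of $\C$.

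To do this I would invoke the discussion in Section \ref{fcgen}: every ring homomorphism $\phi : K(\C) \to \BC$ is an irreducible (one-dimensional) representation of $K(\C)$ and contributes a formal codegree
\[
f_\phi = \sum_{X \in \O(\C)} \phi(X)\phi(X^*).
\]
Taking $\phi$ to be the Frobenius--Perron character $X \mapsto \FPdim(X)$ yields $f_\phi = \FPdim(\C)$. For any field automorphism $\sigma$ of $\BC$, the composition $\sigma \circ \phi$ is again a ring homomorphism $K(\C) \to \BC$, so it too is an irreducible representation of $K(\C)$, and its associated formal codegree is
\[
\sum_{X \in \O(\C)} \sigma(\phi(X))\,\sigma(\phi(X^*)) \;=\; \sigma\bigl(f_\phi\bigr) \;=\; \sigma\bigl(\FPdim(\C)\bigr).
\]
Thus every Galois conjugate of $\FPdim(\C)$ is one of the $f_i$, and applying \eqref{coden} from Theorem \ref{codeg} to $\C \not\simeq \Ve$ gives the desired bound.

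There is essentially no obstacle to overcome; the only subtlety worth mentioning is that this argument works cleanly precisely because $\FPdim$ corresponds to a one-dimensional representation of $K(\C)$—characters are automatically closed under post-composition with Galois automorphisms, so one does not have to separately verify that the Galois twist remains irreducible (as one would if the representation had dimension greater than one).
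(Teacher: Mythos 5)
Your proposal is correct and matches the paper's (implicit) argument: the paper already notes in Section \ref{fcgen} that $\FPdim(\C)$ and its Galois conjugates are formal codegrees, so the corollary follows directly from Theorem \ref{codeg}, exactly as you argue.
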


We note that Theorem \ref{codeg} and Corollary \ref{cFP} are of interest even for {\em pseudo-unitary} (see e.g. \cite[9.4]{EGNO}) fusion categories while Theorem \ref{main} is trivial for such categories.
We have a stronger version of Corollary \ref{cFP} for such categories:

\begin{lemma}\label{psc}
 Let $\C$ be a Galois conjugate of pseudo-unitary fusion category. Then any conjugate
$\gamma$ of the Frobenius-Perron dimension $\FPdim(\C)$ satisfies
\begin{equation}\label{psc1}
\gamma \ge \sqrt{\frac{2\FPdim(\C)}{\FPdim(\C)+1}}.
\end{equation}
\end{lemma}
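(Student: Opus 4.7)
The plan is to reduce \eqref{psc1} to a direct application of the pseudo-unitary inequality \eqref{pseudo} to a pseudo-unitary representative of the Galois class of $\C$. Let $\C_0$ be a pseudo-unitary spherical fusion category with $\C \simeq \C_0^\sigma$ for some automorphism $\sigma$ of $\BC$. Since $\FPdim$ depends only on the Grothendieck ring, which is unchanged by Galois twisting, we have $\FPdim(\C) = \FPdim(\C_0) =: D$. Pseudo-unitarity of $\C_0$ means $\dim(\C_0) = D$, and by the Perron--Frobenius theorem $D$ is the largest among its Galois conjugates.

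As already noted in Section \ref{fcgen}, $\FPdim(\C_0) = D$ and \emph{every} one of its Galois conjugates is a formal codegree of $\C_0$; this is the Galois orbit of the one-dimensional representation $\phi : K(\C_0) \to \BC$ given by the Frobenius--Perron character. Thus, for an arbitrary Galois conjugate $\gamma$ of $D = \FPdim(\C)$, the number $\gamma$ appears as one of the formal codegrees $f_i$ of $\C_0$ and is in particular a totally positive algebraic integer $\geq 1$.

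Now I would apply the pseudo-unitary inequality \eqref{pseudo} to the spherical fusion category $\C_0$, invoking Remark \ref{Gaps} with the choice $f = D$; this is the sharpest choice, since $D$ is the largest Galois conjugate of $\dim(\C_0) = D$. The inequality reads
\[
\frac{1}{\gamma^2} \;\le\; \sum_{i=1}^r \frac{1}{f_i^2} \;\le\; \frac{1}{2}\left(1 + \frac{1}{D}\right) \;=\; \frac{D+1}{2D},
\]
which rearranges directly to \eqref{psc1}. The whole argument is essentially bookkeeping on top of \eqref{pseudo}; the only point requiring care is identifying $\gamma$ as a formal codegree of $\C_0$ and recognizing $D$ itself as the correct choice of $f$ in Remark \ref{Gaps}, both of which are immediate from the results already established in Sections \ref{gldim}--\ref{fcgen}.
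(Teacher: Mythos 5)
Your proof is correct and follows essentially the same route as the paper: reduce to the pseudo-unitary representative (where $\dim=\FPdim$ and sphericity is automatic), note that every conjugate $\gamma$ of $\FPdim$ is a formal codegree, and apply \eqref{pseudo}. The only cosmetic difference is that invoking Remark \ref{Gaps} is unnecessary here, since \eqref{pseudo} already has $\dim(\C_0)=\FPdim(\C)$ on the right-hand side.
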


\begin{proof} We can assume that $\C$ is pseudo-unitary. Recall that $\dim(\C)=\FPdim(\C)$ (see
\cite[Definition 9.4.4]{EGNO}) and $\C$ is automatically spherical (see \cite[9.5]{EGNO}). Thus
\eqref{pseudo} gives
$$\frac1{\gamma^2}\le \sum_{i=1}^r\frac1{f_i^2}\le \frac12(1+\frac1{\dim(\C)})=\frac12(1+\frac1{\FPdim(\C)})=\frac{\FPdim(\C)+1}{2\FPdim(\C)}.$$
The result follows.
\end{proof}

It would be interesting to improve Lemma \ref{psc}. For instance one can ask whether there exists 
a bound for $\gamma$ which is unbounded function of $\FPdim(\C)$.

\section{Fusion categories of the same global dimension}
\subsection{} Here is the main result of this Section:
\begin{theorem}\label{ffib}
 There exists no infinite family of pairwise non-equivalent fusion categories
of the same global dimension.
\end{theorem}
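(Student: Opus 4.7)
The plan is to bound, in terms of $d:=\dim(\C)$, the combinatorial data determining the Grothendieck ring $K(\C)$, thereby producing only finitely many possible fusion rings, and then to invoke Ocneanu rigidity (see \cite[Theorem 2.28]{ENO}), which asserts that a given fusion ring admits only finitely many fusion-categorifications up to tensor equivalence. The rank is handled immediately: Lemma~\ref{bigf} gives $r\le f$ for some Galois conjugate $f$ of $d$, and since $d$ is a fixed algebraic integer with finitely many conjugates, the rank is uniformly bounded by $R:=\max_\sigma \sigma(d)$.

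Next I plan to bound the Frobenius--Perron dimension $\FPdim(X)$ of each simple $X$. Total positivity of the squared norms implies that applying any field automorphism $\sigma$ of $\BC$ to $\sum_{Y\in\O(\C)}|Y|^2=d$ yields $\sum_Y \sigma(|Y|^2)=\sigma(d)$ with every summand positive, so every Galois conjugate of $|X|^2$ lies in $(0,R]$. The point is then that $\FPdim(X)^2$ is itself a Galois conjugate of $|X|^2$: in the spherical case $|X|^2=\dim(X)^2$ and the Frobenius--Perron character of $K(\C)$ is obtained from the quantum-dimension character by a Galois automorphism, while in general one reduces to this via the pivotalization $\tilde\C$ (see Remark~\ref{bigfe}), whose simples lie over those of $\C$ in a controlled way. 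This yields $\FPdim(X)\le\sqrt{R}$.

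Since $\FPdim$ is a ring homomorphism and $\FPdim(X_k)\ge 1$ for every simple $X_k$ (as $\be$ is a summand of $X_k\otimes X_k^*$), the fusion coefficients satisfy
\[
N_{ij}^k \;\le\; \FPdim(X_i)\,\FPdim(X_j) \;\le\; R.
\]
With both $r$ and all entries $N_{ij}^k$ bounded in terms of $d$, only finitely many fusion rings can arise, and Ocneanu rigidity completes the argument.

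The step I expect to be the main obstacle is the identification of $\FPdim(X)^2$ as a Galois conjugate of $|X|^2$ in full generality: it is transparent in the spherical case, but when $\C$ is not spherical it requires careful handling of the pivotalization $\tilde\C$ and of the Galois action on the characters of $K(\tilde\C)$, in order to realize the Frobenius--Perron character as a Galois twist of a character whose associated squared-norm is $|X|^2$.
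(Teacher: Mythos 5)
Your overall architecture (bound the rank, bound the data of the fusion ring, finish with Ocneanu rigidity) matches the paper's strategy, and your rank bound via Lemma \ref{bigf} is exactly the paper's first step. However, the pivotal claim --- that $\FPdim(X)^2$ is a Galois conjugate of $|X|^2$, whence $\FPdim(X)\le\sqrt{R}$ --- is false, and a counterexample occurs in the paper itself. Take $\C=YL\bt\overline{YL}$ from Example \ref{smalld}(v): here $\dim(\C)=5$, so $R=5$, yet the simple object $Z=X\bt Y$ has
$|Z|^2=\frac{3-\sqrt5}{2}\cdot\frac{3+\sqrt5}{2}=1$, while $\FPdim(Z)=\left(\frac{1+\sqrt5}{2}\right)^2=\frac{3+\sqrt5}{2}\approx 2.618>\sqrt5$. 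Since $1$ is rational, its only Galois conjugate is itself, so $\FPdim(Z)^2=\frac{7+3\sqrt5}{2}$ is not a conjugate of $|Z|^2$, and your bound on $\FPdim(X)$ fails. The underlying error is the assertion that the Frobenius--Perron character of $K(\C)$ is obtained from the quantum-dimension character by a Galois automorphism: these two characters can lie in different Galois orbits, a dichotomy the paper itself treats explicitly in Example \ref{smalld}(iii). Your observation that every Galois conjugate of $|X|^2$ lies in $(0,R]$ is correct, but it simply does not control $\FPdim(X)$.

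The paper circumvents this by bounding the total Frobenius--Perron dimension rather than the dimensions of individual simples, using an arithmetic input absent from your argument: for every Galois conjugate $\gamma$ of $\FPdim(\C)$ the ratio $d/\gamma$ is an algebraic integer (\cite[Corollary 2.14]{O3}). Hence the norm $N$ of $\FPdim(\C)$ divides $d^{k}$ with $k\le r\le\lfloor f\rfloor$, so $N$ is at most the largest rational integer dividing $d^{\lfloor f\rfloor}$; since all conjugates of $\FPdim(\C)$ are formal codegrees and therefore $\ge 1$, one gets $\FPdim(\C)\le N$, and finiteness then follows as in your final step (finitely many based rings of bounded Frobenius--Perron dimension, each with finitely many categorifications). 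To repair your proof you would need some replacement of this kind for the false Galois-conjugacy claim.
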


\begin{proof} Let $d$ be a totally positive algebraic integer and let $\C$ be a fusion category 
with $\dim(\C)=d$. Let $f$ be the largest conjugate of $f$. Then by Lemma \ref{bigf}
we have that $r=|\O(\C)|\le \lfloor f\rfloor$. The Grothendieck ring $K(\C)$ has rank $r$, so it has at most
$r$ homomorphisms to $\BC$. Hence $\FPdim(\C)$ has at most $r$ conjugates. 

It is known that for any conjugate $\gamma$ of $\FPdim(\C)$ the ratio $\frac{d}{\gamma}$ is an
algebraic integer, see \cite[Corollary 2.14]{O3}. 
Thus the norm $N$ (i.e. product of all conjugates) of $\FPdim(\C)$ divides $d^k$ where $k$ is the number of the conjugates.  Since $k\le r\le \lfloor f\rfloor$ we see that 
$N$ divides $d^{\lfloor f\rfloor}$. Thus $N\le M$ where $M$ is the largest integer dividing 
$d^{\lfloor f\rfloor}$. Finally $\FPdim(\C)\le N$ since all conjugates of $\FPdim(\C)$ are $\ge 1$,
see Section \ref{fcgen}. We proved that
$$\FPdim(\C)\le M=\mbox{largest integer dividing}\; \; d^{\lfloor f\rfloor}.$$
The theorem is proved as there are just finitely many fusion categories of bounded Frobenius-Perron 
dimension (namely it is easy to see that there are just finitely many based rings of bounded
Frobenius-Perron dimension and for each such ring there are just finitely many categorifications
by \cite[Theorem 9.1.4]{EGNO}).
\end{proof}

One can try to classify all fusion categories with $\dim(\C)=d$ where $d$ is small number. Here are
some examples.

\begin{example}\label{smalld} 
(i) Let $d=2$. Then by Lemma \ref{bigf} we have $r\le 2$; clearly $r=1$ is impossible
and for $r=2$ we get only pointed categories associated with the group $\BZ/2\BZ$ by Remark
\ref{bigfe}.

(ii) Let $d=3$. In this case $r\le 3$; $r=1$ is impossible and $r=3$ gives pointed categories. In the case
$r=2$ by the arguments in the proof of Theorem \ref{ffib} we see that the norm of the Frobenius-Perron
dimension divides $d^r=9$. However one computes easily that the norm of the Frobenius-Perron dimension of the ring $K_n$ (see Example \ref{rank2}) is $n^2+4$ which never divides 9. Thus this
case is impossible.

(iii) Let $d=4$ and assume that category $\C$ is spherical and not pointed. Thus $r\le 3$ and we 
have at most 3 distinct homomorphisms $K(\C)\to \BC$. Consider the action of the Galois group
on such homomorphisms. If the dimension homomorphism and the Frobenius-Perron homomorphism
are in the same orbit then the category is weakly integral (see \cite[9.6]{EGNO}), hence {\em Ising category} (see e.g. \cite[Appendix B]{DGNO}). Otherwise either orbit of the dimension or of the 
Frobenius-Perron dimension has exactly one element; this again implies that $\C$ is weakly integral,
see \cite[Exercise 9.6.2]{EGNO}.

(iv) Let $d=\frac{5\pm \sqrt{5}}2$ and assume that category $\C$ is spherical.
Then the Galois
conjugate $f=\frac{5\mp \sqrt{5}}2$ is also one of formal codegrees of $\C$. Since
$$\frac1{\dim(\C)}+\frac1f=1,$$
equation \eqref{sum1} implies that $\C$ has no other formal codegrees. Thus $\C$ is of rank 2, and
using Example \ref{rank2} we see that $K(\C)=K_1$. It follows that $\C \simeq YL$ or
$\C \simeq \overline{YL}$ where $YL$ is the Yang-Lee
category and $\overline{YL}$ is its Galois conjugate depending on the value of $\dim(\C)$, see e.g. 
\cite[2.5]{O2} and \cite[Exercise 8.18.7]{EGNO}.

(v) Let $d=5$ and assume that category $\C$ is spherical. Hence $r\le 5$. 
In the case $r\le 3$ the arguments as in (iii) above show that $\C$ is weakly integral, hence pointed,
see \cite[Corollary 8.30]{ENO}. The same conclusion holds if $r=5$ by Remark \ref{bigfe}.

We assume now that $r=4$ and $\C$ is not weakly integral. Then there are 4 homomorphisms
$K(\C)\to \BC$ which split into two orbits of size 2 under the action of the Galois group: orbit of the
dimension homomorphism and orbit of the Frobenius-Perron homomorphism. Both formal codegrees
for the first orbit equal $\dim(\C)=5$; let $f_1$ and $f_2$ be the formal codegrees for the second orbit.
Then $f_1f_2$ ($=$ norm of the Frobenius-Perron dimension) is an integer $>1$ and dividing $d^2=25$.
Also $\frac1{f_1}+\frac1{f_2}=1-\frac25=\frac35$ by \eqref{sum1}. In the case $f_1f_2=5$
we get $f_1+f_2=3$ whence $f_1$ and $f_2$ are not real. Thus $f_1f_2=25$ and $f_1+f_2=15$
whence $\FPdim(\C)=\frac{15+5\sqrt{5}}2=\left(\frac{5+\sqrt{5}}2\right)^2$. We claim now that
$\C$ must be equivalent to $YL\boxtimes \overline{YL}$, cf. \cite[Exercise 9.4.6]{EGNO}.
Indeed, there is a unique expansion of $\FPdim(\C)-1=\frac{13+5\sqrt{5}}2$ into a sum of
three totally positive algebraic integers from the field $\BQ(\sqrt{5})$ and maximal among their
conjugates:
$$\frac{13+5\sqrt{5}}2=\frac{3+\sqrt{5}}2+\frac{3+\sqrt{5}}2+\frac{7+3\sqrt{5}}2=\left( \frac{1+\sqrt{5}}2\right)^2+\left( \frac{1+\sqrt{5}}2\right)^2+\left( \frac{3+\sqrt{5}}2\right)^2.$$
It follows that the simple objects of $\C$ have the Frobenius-Perron dimensions $1, \frac{1+\sqrt{5}}2,$ 
$\frac{1+\sqrt{5}}2, \frac{3+\sqrt{5}}2$. Let $X$ and $Y$ be the simple objects of $\C$ with 
$\FPdim(X)=\FPdim(Y)=\frac{1+\sqrt{5}}2$. Since $X\otimes X^*$ is a direct sum of $\be$ and an
object of dimension $\frac{1+\sqrt{5}}2$, we see that at least one of $X$ and $Y$ is self dual.
Hence both of them are self dual and $X\otimes Y\simeq Y\otimes X$ is simple. Thus 
$\Hom(X\otimes X, Y)=\Hom(X, Y\otimes X)=0$, whence $X\otimes X=\be \oplus X$ and, similarly,
$Y\otimes Y\simeq \be \oplus Y$. In other words we have an isomorphism of based rings $K(\C)=K(YL)\otimes K(YL)=K(YL \bt \overline{YL})$.

Let us construct a tensor equivalence $\C \simeq YL \bt \overline{YL}$. We can assume that
$\dim(X)=\frac{1-\sqrt{5}}2$ and $\dim(Y)=\frac{1+\sqrt{5}}2$. Let $YL\subset \C$ be the fusion subcategory consisting of direct sums of $\be$ and $X$ and let $\overline{YL}\subset \C$ be the
fusion subcategory consisting of direct sums of $\be$ and $Y$. Consider the Drinfeld center
$\Z(\C)$ of $\C$, see e.g. \cite[7.13]{EGNO}.
Using \cite[Proposition 9.2.2]{EGNO} and \cite[Theorem 2.13]{O3} we deduce
that $\Z(\C)$ contains simple objects which map to $\be \oplus X$ and $\be \oplus Y$ under the
forgetful functor $\Z(\C)\to \C$; moreover these objects have twists 1. 
Thus the subcategories $\A \subset \Z(\C)$ and $\B \subset \Z(\C)$
consisting of objects sent to $YL$ and $\overline{YL}$ under the forgetful functor are non-trivial.
We have obvious braided tensor functors $\A \to \Z(YL)$ and $\B \to \Z(\overline{YL})$. Recall
that $YL$ has a structure of the modular category (see \cite[8.18]{EGNO}) whence
$\Z(YL)\simeq YL\bt YL$ as fusion categories. It is easy to see that no proper subcategory
of $\Z(YL)$ contains an object with twist 1 and which maps to $\be \oplus X$ under the forgetful
functor. We conclude that the functor $\A \to \Z(YL)$ is surjective and, similarly, the functor
$\B\to \Z(\overline{YL})$ is surjective. Comparing the Frobenius-Perron dimensions (see e.g.
\cite[Lemma 3.38]{DGNO}) we conclude
that both functors are equivalences and $\Z(\C)\simeq \A \bt \B \simeq YL \bt YL \bt \overline{YL}
\bt \overline{YL}$. Hence $\Z(\C)$ contains 4 fusion subcategories equivalent to $YL\bt \overline{YL}$
and such that the restriction of the forgetful functor yields a desired equivalence 
$\C \simeq YL\bt \overline{YL}$.
\end{example}

\appendix

\section{}


\centerline{\bf By Pavel Etingof and Victor Ostrik}

\vskip .05in

\subsection{}
The goal of this appendix is 
to prove (using a computer) the following proposition. 

\begin{proposition}\label{mainpro} The only number $1<d<4\sqrt{3}/5=1.38564...$ which can serve as the 
global dimension of a spherical fusion category over complex numbers is $d=\frac{5-\sqrt{5}}{2}=1.381966..$. 
\end{proposition}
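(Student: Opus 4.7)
The plan is to combine the pseudo-unitary inequality \eqref{pseudo} (strengthened via Remark~\ref{Gaps}) with the fact, recalled in Section~\ref{fcgen}, that every Galois conjugate of $\dim(\C)$ appears among the formal codegrees of $\C$, and then to reduce the problem to a finite arithmetic search.

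First I would use the upper bound $d<4\sqrt{3}/5$ to constrain the degree $k$ of the minimal polynomial of $d$. Since $(4\sqrt{3}/5)^2=48/25$, the inequality \eqref{kbound} from Theorem~\ref{gdim}(ii) forces $16(k-1)/(8k-7)<48/25$, i.e.\ $k\le 3$. The case $k=1$ is excluded immediately, as a rational algebraic integer $d$ would satisfy $d\ge 2$. Next, letting $f$ denote the largest Galois conjugate of $d$, one has
\[
\frac{1}{d^{2}} \le \sum_{i}\frac{1}{f_{i}^{2}} \le \frac12\left(1+\frac{1}{f}\right),
\]
which yields $f\le d^{2}/(2-d^{2})<24$ throughout our range. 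Combined with the trivial lower bound $d_{j}\ge 1$ for every conjugate, this bounds all elementary symmetric functions of the conjugates, so only finitely many minimal polynomials $p(t)\in\BZ[t]$ of degree $k\in\{2,3\}$ remain.

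The second step is the computer enumeration. For each such $p(t)$ whose roots $d=d_{1}\le\cdots\le d_{k}$ satisfy $d_{1}\in(4/3,4\sqrt{3}/5)$ (the lower bound provided by Theorem~\ref{gdim}(i)) and $d_{k}<24$, I would test the strengthened inequality
\[
\sum_{j=1}^{k}\frac{1}{d_{j}^{2}} \le \frac12\left(1+\frac{1}{d_{k}}\right),
\]
and discard any polynomial for which it fails. The polynomial $t^{2}-5t+5$ of $d=(5-\sqrt{5})/2$ is one of the survivors.

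The remaining step is to eliminate the other surviving polynomials. For each of them, one uses the codegree identity \eqref{sum1}: the conjugates $d_{1},\dots,d_{k}$ are codegrees of one-dimensional $K(\C)$-modules, contributing $\sum_{j}1/d_{j}$ to $\sum_{E}\dim(E)/f_{E}=1$, and any additional irreducibles must account for the shortfall with formal codegrees that are themselves totally positive algebraic integers $\ge 1$. Combined with the rank bound $r\le\lfloor f\rfloor$ from Lemma~\ref{bigf} and the divisibility constraint that $\FPdim(\C)/\dim(\C)$ is an algebraic integer (exploited in the proof of Theorem~\ref{ffib}), the remaining candidates can be excluded; the explicit classifications of rank $2$ and rank $3$ spherical fusion categories can be invoked to finish any residual cases. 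The main obstacle is precisely this last step: several polynomials comfortably survive the inequality \eqref{pseudo} alone — for example $t^{2}-8t+9$, whose smaller root $4-\sqrt{7}\approx 1.354$ lies in the allowed interval and satisfies \eqref{pseudo} — so the exclusion is somewhat case-by-case and depends on the subtler arithmetic constraints above. The role of the computer is to organise this bookkeeping and confirm that $(5-\sqrt{5})/2$ is the only survivor.
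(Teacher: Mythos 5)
Your overall architecture matches the paper's: use \eqref{kbound} to force $k\le 3$, dispose of $k=1$ by integrality, bound the largest conjugate via \eqref{pseudo} and Remark~\ref{Gaps}, and reduce to a finite search over integer minimal polynomials of degrees $2$ and $3$. Up to that point the proposal is sound (your bound $f\le d^2/(2-d^2)<24$ on the largest conjugate is correct and comparable to the paper's $d^*\le K(d)\le 22$).

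The gap is in the exclusion step, which you yourself flag as the main obstacle. The paper's decisive arithmetic input there is \cite[Theorem 1.2]{Ofc}: $\dim(\C)$ is a \emph{$d$-number}, i.e.\ it divides all of its Galois conjugates. For $P(x)=x^2-ax+b$ this forces $b\mid a^2$, and for $P(x)=x^3-ax^2+bx-c$ it forces $c\mid a^3$ and $c^2\mid b^3$; these divisibilities are what make the finite search conclusive. Your own example $t^2-8t+9$ dies instantly from $9\nmid 64$. The substitutes you propose --- the identity \eqref{sum1}, the rank bound from Lemma~\ref{bigf}, and the divisibility used in Theorem~\ref{ffib} (note the direction: it is $\dim(\C)/\gamma$ that is an algebraic integer for conjugates $\gamma$ of $\FPdim(\C)$, not $\FPdim(\C)/\dim(\C)$) --- are not shown to eliminate the survivors. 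One can in fact grind out $4-\sqrt{7}$ with them after a page of case analysis on the possible ranks and codegree configurations, but you exhibit no uniform argument, so as written the proof is incomplete at precisely the step that matters. A secondary, inessential difference: for $k=2$ the paper first sharpens the lower bound to $d\ge(\sqrt{41}-1)/4$ by noting that $3d-4$ and $3d^*-4$ are positive conjugate algebraic integers, whence $(3d-4)(3d^*-4)\ge 1$; this shrinks the search window, though your larger window is still finite.
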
 

\begin{proof} Let $1<d<\sqrt{2}$ be the global dimension of a spherical fusion category (a totally positive algebraic integer). Let $k$ be the number of algebraic conjugates of $d$. By Theorem \ref{gdim} (i) we know that $d>\frac43$ and Theorem \ref{gdim} (ii) implies that the Proposition holds for $k\ge 4$, so
it suffices to prove it for $k=2,3$. 


Let $d_*$ be the biggest conjugate of $d$. By equation \eqref{916} we have
$$
\frac{1}{d^2}\le \frac{9}{16}-\left(\frac{1}{4}-\frac{1}{d^*}\right)^2,
$$
which yields
\begin{equation}\label{mainineq}
d^*\le K,\text{ where }
K=K(d):=\left(\frac{1}{4}-\sqrt{\frac{9}{16}-\frac{1}{d^2}}\right)^{-1}. 
\end{equation}

Let $k=2$. Then we have 
$$
(3d-4)(3d_*-4)\ge 1, 
$$
so by \eqref{mainineq},
$$
(3d-4)(3K(d)-4)\ge 1,
$$
which yields
$$
d\ge \frac{1}{4}(\sqrt{41}-1)=1.3507...
$$

Now we can use a simple computer program to show that there are no possible values of $d$ in the range 
$$
\frac{1}{4}(\sqrt{41}-1)=d_-\le d<d_+=\frac{4}{5}\sqrt{3}.
$$ 
Namely, let 
$$
P(x)=x^2-ax+b
$$
be the minimal polynomial of $d$ over $\Bbb Z$. Then we have $P(d_-)\ge 0$ and $P(d_+)<0$, 
so 
$$
ad_--d_-^2\le b<ad_+-d_+^2.
$$
Also by \cite[Theorem 1.2]{Ofc}, $d$ has to be a $d$-number, i.e., must divide all its conjugates, 
which results in the condition that $a^2$ is divisible by $b$. 
Finally, we have 
$$
a=d+d^*\le \frac{4}{5}\sqrt{3}+K\left(\frac{4}{5}\sqrt{3}\right),
$$
which yields $a\le 23$. Going through all possibilities, we find that there are no solutions except 
$a=b=5$, which gives $d=(5-\sqrt{5})/2$. (In fact, there are so few cases that this can be easily checked by hand). 

Now suppose that $k=3$, and $d<d_+$. By Theorem \ref{gdim} (ii) we have $d\ge \tilde d_-=
\sqrt{\frac{32}{17}}\approx 1.37199$. Let
$$
P(x)=x^3-ax^2+bx-c
$$
be the minimal polynomial of $d$. Without loss of generality we may assume that $d$ is the smallest root of $P$. Let $d_1=d<d_2<d_3=d_*$ be the roots of $P$ (they all have to be real and positive). Let us list some more properties of $(a,b,c)$: 

(i) $P(\tilde d_-)\le 0$ and $P(d_+)>0$ ($P(x)$ cannot have 2 roots $<2$ since it would contradict
to \eqref{sum1}), so 
$$
b\tilde d_--a\tilde d_-^2+\tilde d_-^3\le c<bd_+-ad_+^2+d_+^3;
$$

(ii) $d_3=d_*\le K(\frac{4}{5}\sqrt{3})\le 22$, thus $a=d_1+d_2+d_3\le 45$. 

(iii) $b\le a^2/3$ (by the arithmetic and geometric mean inequality, as all the roots are real). 

(iv) $d$ has to be a $d$-number, hence $c$ divides $a^3$ and $c^2$ divides $b^3$. 

(v) Inequality \eqref{mainineq} is satisfied. 

Again, going through all possibilities, we find that there are no solutions; the program (in MAGMA) 
does this for less than a second. 

This completes the proof of Proposition \ref{mainpro}. 
\end{proof} 

\begin{remark} Recall that by Example \ref{smalld} (iv) the only spherical fusion category with 
$\dim(\C)=\frac{5-\sqrt{5}}2$ is the Yang-Lee category $YL$.
\end{remark}

\bibliographystyle{ams-alpha}

\end{document}